\newtheorem{theorem}{Theorem}
\newtheorem{lemma}[theorem]{Lemma}
\theoremstyle{remark}
\begin{document}

\title{Global well-posedness for supercritical SQG with perturbations of radially symmetric data}

\author{Aynur Bulut}
\address{Department of Mathematics, Louisiana State University, 303 Lockett Hall, Baton Rouge, LA 70803}
\email{aynurbulut@lsu.edu}

\author{Hongjie Dong}
\address{Division of Applied Mathematics, Brown University, Providence, RI, 02912.}
\email{Hongjie\_Dong@brown.edu}

\thanks{\today}

\begin{abstract}
We study the global well-posedness of the supercritical dissipative surface quasi-geostrophic (SQG) equation, a key model in geophysical fluid dynamics. While local well-posedness is known, achieving global well-posedness for large initial data remains open. Motivated by enhanced decay in radial solutions, we aim to establish global well-posedness for small perturbations of potentially large radial data. Our main result shows that for small perturbations of radial data, the SQG equation admits a unique global solution.
\end{abstract}

\maketitle

\section{Introduction}

Fix $0<\gamma<1$ and set $s=s(\gamma):=2-\gamma$.  In this short note, we consider the supercritical dissipative surface quasi-geostrophic (SQG) equation,
\begin{align}
\partial_t\theta+R^\perp\theta\cdot\nabla\theta+\Lambda^\gamma\theta=0,\label{eq-sqg}
\end{align}
posed on $\mathbb{R}^2$, where $\Lambda:=\sqrt{-\Delta}$, and $R^\perp$ is defined by $$R^\perp:\theta\mapsto (R_2\theta,-R_1\theta),$$
with $R_i:=\partial_i\Lambda^{-1}$, $i\in \{1,2\}$ denoting the $i$th component of the vector-valued Riesz transform on $\mathbb{R}^2$.

The SQG equation ($\ref{eq-sqg}$) arises as a fundamental mathematical model in the study of rapidly rotating geophysical fluid dynamics.  From a mathematical perspective, it exhibits a qualitative similarity to the Euler and Navier-Stokes systems. While the local well-posedness of the supercritical SQG in the critical space $H^s$ and the small data global well-posedness were established long ago by Miura \cite{M06} and Ju \cite{NJ22}, the global-in-time well-posedness of the equation with arbitrary large initial data remains a challenge. We refer readers to \cite{CW,DP09} for conditional regularity results, \cite{Sil,Dab} for eventual regularity of weak solutions, and \cite{DKV,XZ,DKSV,CV16} for global well-posedness for equations with slightly supercritical dissipations (see also \cite{BD21} and \cite{BHP23} and the references cited therein for recent results on the forced SQG in a variety of settings).

We observe that when the initial data is radially symmetric, the solution remains radial, rendering the nonlinear term in \eqref{eq-sqg} vanishing. Consequently, the solution satisfies a linear fractional heat equation, exhibiting global existence and explicit expression using the fractional heat kernel. Moreover, such a solution instantaneously smoothens and decays in the time variable.

In this paper, our goal is to examine the extent to which the improved decay enjoyed by radial solutions allows one to obtain a global well-postedness result for data which is a {\it small perturbation} (in a suitably chosen norm) of the class of radial data.  In particular, we establish the following result.

\begin{theorem}
\label{thm1}
Fix $0<\gamma<1$ and set $s=2-\gamma$.  Then there exists $\epsilon>0$ such that if $f\in L^1(\mathbb{R}^2)\cap H^s(\mathbb{R}^2)$ is radially symmetric with $f\geq 0$, and $g\in H^{2}(\mathbb{R}^2)$ satisfies $$\lVert g\rVert_{H^{2}}\leq \epsilon$$ then the initial value problem
\begin{align}
\left\lbrace\begin{array}{c}\partial_t\theta+u\cdot\nabla\theta+\Lambda^\gamma \theta=0\\u=R^\perp\theta\\\theta(0,x)=f(x)+g(x)\end{array}\right.\label{eq-gwp-1}
\end{align}
has a unique global solution in $C([0,\infty);H^{2-\gamma}(\mathbb{R}^2))$.
\end{theorem}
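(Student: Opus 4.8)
\section*{Proof proposal}

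The plan is to separate the (large, radial) part of the data from the (small) perturbation and to treat the resulting perturbation equation by a continuity argument. Set $\bar\theta(t):=e^{-t\Lambda^\gamma}f$. Since $f$ is radial, $\bar\theta(t)$ is radial for every $t$, so $R^\perp\bar\theta=\nabla^\perp(\Lambda^{-1}\bar\theta)$ is tangential while $\nabla\bar\theta$ is radial, whence $R^\perp\bar\theta\cdot\nabla\bar\theta\equiv 0$; thus $\bar\theta$ is an exact global solution of \eqref{eq-gwp-1} with datum $f$, lying in $C([0,\infty);H^{2-\gamma})$. Writing $\theta=\bar\theta+\eta$ reduces the problem to solving, globally and with $\eta$ small,
\begin{align*}
\partial_t\eta+\Lambda^\gamma\eta+R^\perp\bar\theta\cdot\nabla\eta+R^\perp\eta\cdot\nabla\bar\theta+R^\perp\eta\cdot\nabla\eta=0,\qquad \eta(0)=g,
\end{align*}
equivalently, using $\divop R^\perp=0$, the Duhamel identity $\eta(t)=e^{-t\Lambda^\gamma}g-\int_0^t e^{-(t-\tau)\Lambda^\gamma}\,\nabla\!\cdot\!\big(\bar\theta R^\perp\eta+\eta R^\perp\bar\theta+\eta R^\perp\eta\big)(\tau)\,d\tau$.

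First I would record the quantitative behavior of the background. From $f\in L^1\cap H^{2-\gamma}$, $f\ge 0$ and properties of the fractional heat kernel one has, for all $t>0$ and $\sigma\ge0$, the bounds $\|\bar\theta(t)\|_{H^{2-\gamma}}\le\|f\|_{H^{2-\gamma}}$, the smoothing estimate $\|\bar\theta(t)\|_{\dot H^{2-\gamma+\sigma}}\lesssim t^{-\sigma/\gamma}\|f\|_{H^{2-\gamma}}$, and the decay estimate $\|\bar\theta(t)\|_{\dot H^{\sigma}}\lesssim t^{-(1+\sigma)/\gamma}\|f\|_{L^1}$ (here $f\ge0$ is used only to keep $\|\bar\theta(t)\|_{L^1}=\|f\|_{L^1}$, so that the $L^1\!\to\!L^2$ smoothing is not wasted). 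Interpolating the two regimes shows that $\bar\theta$ belongs to $L^1_t\cap L^2_t$ with values in $\dot H^\sigma$, uniformly on $[0,\infty)$, for every $0\le\sigma\le 2-\gamma$ (and in particular to $L^\infty_t L^\infty_x$); these are exactly the quantities of $\bar\theta$ that will multiply $\eta$ in the coupling terms, and the point is that, although they depend on $f$ and may be large, they are all \emph{finite}.

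Next, a continuity (bootstrap) argument for $\eta$. Local existence in $H^{2-\gamma}$ is Miura's theorem, which also provides the bilinear bound for the purely nonlinear term $\nabla\!\cdot\!(\eta R^\perp\eta)$ used in the small-data theory of Ju, in a space $X_T$ adapted to the supercritical dissipation (a maximal-regularity / Chemin--Lerner type space built on $e^{-t\Lambda^\gamma}$, with the extra room coming from $g\in H^2$). The new ingredient is the estimate of the two linear-in-$\eta$ coupling terms: via a paraproduct decomposition one obtains $\big\lVert \nabla\!\cdot\!(\bar\theta R^\perp\eta+\eta R^\perp\bar\theta)\big\rVert_{\ast}\lesssim \Phi(T)\,\lVert\eta\rVert_{X_T}$, where $\Phi(T)$ is a space--time norm of $\bar\theta$ of the type above, so that $\Phi(\infty)<\infty$. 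One then closes an inequality of the schematic form $\lVert\eta\rVert_{X_T}\le C\lVert g\rVert_{H^2}+C\Phi(\infty)\lVert\eta\rVert_{X_T}+C\lVert\eta\rVert_{X_T}^2$ on the maximal interval where $\lVert\eta\rVert_{X_T}$ is below a fixed threshold, by splitting $[0,\infty)$ at a time $T_\ast=T_\ast(f)$ beyond which $\Phi$ is small (running a Gronwall-type absorption on $[T_\ast,\infty)$) and using a large-but-finite estimate on $[0,T_\ast]$; choosing $\epsilon$ small — and here it is important that $\epsilon$ can be taken \emph{independent of $f$}, since the $f$-dependence has been confined to the finite constants $\Phi(\infty)$ and $T_\ast$ — yields $\lVert\eta\rVert_{X_\infty}<\infty$. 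Since $\bar\theta\in C([0,\infty);H^{2-\gamma})$ and $X_\infty\hookrightarrow C([0,\infty);H^{2-\gamma})$, $\theta=\bar\theta+\eta$ is the desired global solution; uniqueness follows by applying the same bilinear and linear estimates to the difference of two solutions.

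The main obstacle is the treatment of the coupling terms in the genuinely supercritical range $0<\gamma<1$. The dissipation $\Lambda^\gamma$ gains only $\gamma/2$ derivatives in $L^2$, which is not enough to absorb the full derivative in $\nabla\!\cdot\!(\eta R^\perp\bar\theta)$ by a naive energy estimate: controlling $\lVert\nabla\eta\rVert_{L^\infty}$, or equivalently placing the product $\eta\,R^\perp\bar\theta$ at the regularity dictated by the equation, forces one either to give $\bar\theta$ more than its critical number of derivatives — which, near $t=0$ where $\bar\theta$ is only as smooth as $f$, costs a non-integrable power $t^{-\#}$ — or to give $\eta$ more derivatives than the dissipative gain supplies. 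Overcoming this is where the decay of $\bar\theta$ must be played against the weak smoothing with some care: at large times the coupling is effectively small and one is in the perturbative regime of the small-data theory, while the delicate point is the short-time analysis, where one must exploit the smoothing of \emph{both} $\bar\theta$ and $\eta$ (and the surplus regularity of $g$) to distribute derivatives so that $\bar\theta$ only ever appears at order $\le 2-\gamma$, and simultaneously make sure that the large size of $f$ enters the estimates solely through harmless finite constants rather than through the size of $\eta$ relative to $g$. I expect this balancing — the paraproduct bookkeeping for the linear coupling near $t=0$ together with the uniformity in $f$ — to be the technical heart of the argument.
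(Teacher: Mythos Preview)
Your decomposition $\theta=\bar\theta+\eta$ and the perturbative continuity strategy are exactly those of the paper (which writes $\theta_0,\theta_1$). Two differences are worth flagging.

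The paper does not work with the Duhamel formulation or Chemin--Lerner type spaces; it runs a direct $\dot H^s$ energy estimate on $\theta_1$, using Kato--Ponce commutator bounds in the style of Ju to handle the transport terms. The resulting differential inequality requires control of $\lVert\theta_0\rVert_{\dot H^{s+\gamma/2+\sigma}}$, which is more regularity than $f$ carries at $t=0$ --- this is precisely the short-time obstacle you flag. But rather than the paraproduct bookkeeping and the surplus $H^2$ regularity of $g$ that you propose, the paper sidesteps the issue entirely: it first invokes local well-posedness and continuous dependence for the \emph{full} SQG equation to reach a small time $t_1>0$ (depending only on $f$, not on $\epsilon$) at which $\lVert\theta_1(t_1)\rVert_{H^s}\le 2\epsilon$; for $t\ge t_1$ the fractional heat semigroup has already smoothed $\theta_0$ into $C^\infty$, so the energy inequality applies without any regularity deficit. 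A Gronwall argument then propagates smallness of $\theta_1$ to a later time $t_2$ chosen so that $\lVert\theta_0(t_2)\rVert_{H^s}$ falls below the small-data threshold, and the small-data global theory takes over from $t_2$. This three-stage structure (local theory on $[0,t_1]$; energy/Gronwall on $[t_1,t_2]$; small-data GWP from $t_2$) is more elementary than closing in a global-in-time space $X_\infty$, and it eliminates the delicate $t\to 0$ analysis that you identify as the technical heart.

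Separately, your assertion that $\epsilon$ can be taken \emph{independent of $f$} is not supported by the argument you sketch, and the justification is a non sequitur: you say the $f$-dependence is confined to $\Phi(\infty)$ and $T_\ast$, but $\epsilon$ must then be chosen small relative to constants built from exactly these quantities (on $[0,T_\ast]$ a Gronwall-type bound amplifies $\lVert g\rVert$ by a factor of order $\exp(C\Phi(T_\ast))$, and this amplified quantity must still fall below the absorption or small-data threshold), so $\epsilon$ inherits the $f$-dependence. The paper's proof has the same feature --- its $\epsilon$ depends on $f$ through $t_1$, $t_2$, and the constant $C_1=C_1(t_1)$ --- so the quantifier order in the theorem statement should be read loosely; but you should not claim a uniformity the argument does not deliver.
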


The proof of Theorem \ref{thm1} relies on a decomposition argument, splitting the solution $\theta$ into a radial part $\theta_0$, solving the fractional heat equation with initial data $f$, and a perturbation part $\theta_1$, shown to be small for all time. For the radial part, we utilize smoothing and decay estimates, while for the perturbation part, we employ energy-type arguments akin to \cite{NJ22}. Finally, we leverage a continuity argument to close the estimates.

Because of the importance of the dissipation effects in our arguments, we end this introduction by recalling some decay estimates for the fractional heat equation on $\mathbb{R}^2$.  In particular, for any $r\in [1,2]$ and $s\geq 0$, we have
\begin{align}
\lVert e^{-|\nabla|^\gamma t}f\rVert_{L^2}&\lesssim t^{-\frac{2}{\gamma}(\frac{1}{r}-\frac{1}{2})}\lVert f\rVert_{L^r},\label{eq-heat1}
\end{align}
and
\begin{align}
\lVert e^{-|\nabla|^\gamma t}f\rVert_{\dot{H}^s}&\lesssim t^{-\frac{s}{\gamma}}\lVert f\rVert_{L^2}\label{eq-heat2}.
\end{align}
The estimates ($\ref{eq-heat1}$) and ($\ref{eq-heat2}$) can be established by appealing to the convolution representation of solutions and the decay properties of the heat kernel on $\mathbb{R}^2$, combined with Young's inequality; see, for instance, \cite[Lemma 3.1]{MYZ}.

We conclude this introduction by briefly recalling a few papers from the literature which treat decay and stability results that are of note in the context of the statement and proof of Theorem $\ref{thm1}$.  In the log-supercritical SQG regime, \cite{C23} establishes a class of decay results for H\"older norms, while in the fully supercritical setting, \cite{BK21} establishes decay in the scaling critical norm for solutions evolving from small initial data.  In the context of initial data of larger norm, in \cite{LPW19} the authors establish a supercritical SQG global well-posedness result for perturbations of initial data having Fourier support away from the origin; in this respect, the result of \cite{LPW19} is conceptually similar to the radial perturbation result presented in Theorem $\ref{thm1}$, though the norms involved in \cite{LPW19} are not scaling critical.

\subsection*{Acknowledgements}

H. Dong was partially supported by the NSF under agreement DMS-2055244.

\section{Motivating decomposition: perturbation lemma}
\label{sec2}

Let $f\in H^s(\mathbb{R}^2)$ and $g\in H^2(\mathbb{R}^2)$ be given such that $f$ is radially symmetric.  Our goal is to show that if $g$ is sufficiently small in $H^2$ norm, then solutions to the initial value problem (\ref{eq-gwp-1}) can be extended globally in time.  For this, we will use a class of a priori estimates motivated by a perturbation argument.

In particular, letting $K_\gamma(t)$ be the fractional heat kernel defined by $\widehat{K_\gamma(t)}(\xi)=\exp(-|\xi|^{\gamma}t)$ for $t>0$ and $\xi\in\mathbb{R}^2$, we will view solutions to \eqref{eq-gwp-1} as perturbations of $$\theta_0:t\mapsto K_\gamma(t)\ast f,\quad t>0,$$
that is, the solution to
\begin{align*}
\left\lbrace\begin{array}{c}\partial_t\theta_0+\Lambda^\gamma \theta_0=0\\
\theta_0(0)=f.\end{array}\right.
\end{align*}

Indeed, letting $\theta$ solve \eqref{eq-gwp-1} and writing $$\theta=\theta_1+\theta_0,$$ the perturbation $\theta_1$ solves
\begin{align}
\partial_t\theta_1+\Lambda^\gamma\theta_1&=-(R^\perp\theta_0)\cdot\nabla\theta_1-(R^\perp\theta_1)\cdot\nabla\theta_1-(R^\perp\theta_1)\cdot\nabla\theta_0,\label{eq-theta1}
\end{align}
with $\theta_1(0)=g$.

We now introduce a lemma giving a priori bounds on solutions to the perturbed equation \eqref{eq-theta1}.

\begin{lemma}
\label{lem1}
Let $f$, $\theta_0$ be as above, and suppose that $\theta_1$ solves \eqref{eq-theta1}.  Then, for all $t>0$, we have
\begin{align*}
&\frac{d}{dt}\Big[\frac{1}{2}\lVert \Lambda^s\theta_1\rVert_{L^2}^2\Big]+\frac{1}{4}\lVert \Lambda^{s+\frac{\gamma}{2}}\theta_1\rVert_{L^2}^2\\
&\hspace{0.2in}\lesssim \lVert \Lambda^s\theta_1\rVert_{L^2}^2\lVert \Lambda^{s+\frac{\gamma}{2}}\theta_0\rVert_{L^2}^2\\
&\hspace{0.4in} +(\lVert\theta_1\rVert_{L^2}^2+\lVert \Lambda^s\theta_1\rVert_{L^2}^2)\lVert \Lambda^{s+\frac{\gamma}{2}+\sigma}\theta_0\rVert_{L^2}^2\\
&\hspace{0.4in}+\lVert \Lambda^s\theta_1\rVert_{L^2}\lVert \Lambda^{s+\frac{\gamma}{2}}\theta_1\rVert_{L^2}^2.
\end{align*}
\end{lemma}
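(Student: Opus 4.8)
The plan is to carry out an $L^2$ energy estimate at the level of $\dot H^s$ for the perturbed equation \eqref{eq-theta1}. Applying $\Lambda^s$ to \eqref{eq-theta1}, pairing with $\Lambda^s\theta_1$ in $L^2$, and using the coercivity of the dissipation, $\langle \Lambda^s\Lambda^\gamma\theta_1,\Lambda^s\theta_1\rangle = \lVert\Lambda^{s+\gamma/2}\theta_1\rVert_{L^2}^2$, one obtains
\[
\frac{d}{dt}\Big[\tfrac12\lVert\Lambda^s\theta_1\rVert_{L^2}^2\Big]+\lVert\Lambda^{s+\gamma/2}\theta_1\rVert_{L^2}^2
= -\sum_{j=1}^3 I_j,
\]
where $I_1,I_2,I_3$ are the $L^2$ pairings of $\Lambda^s\theta_1$ against $\Lambda^s$ applied to the three nonlinear terms $(R^\perp\theta_0)\cdot\nabla\theta_1$, $(R^\perp\theta_1)\cdot\nabla\theta_1$, and $(R^\perp\theta_1)\cdot\nabla\theta_0$ respectively. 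The whole game is then to absorb each $I_j$ either into the dissipative term $\lVert\Lambda^{s+\gamma/2}\theta_1\rVert_{L^2}^2$ (keeping a $\tfrac14$ of it on the left) or into one of the three terms on the right-hand side of the claimed inequality.

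The estimate for $I_2$ — the term purely quadratic in $\theta_1$ — is the most standard: since $R^\perp\theta_1$ is divergence free, one has the commutator/cancellation identity $\langle \Lambda^s((R^\perp\theta_1)\cdot\nabla\theta_1),\Lambda^s\theta_1\rangle = \langle [\Lambda^s,R^\perp\theta_1\cdot\nabla]\theta_1,\Lambda^s\theta_1\rangle$, and the Kenig–Ponce–Vega/Kato–Ponce commutator estimate together with the boundedness of the Riesz transform and the fractional Gagliardo–Nirenberg inequality $\lVert\Lambda^a\theta_1\rVert_{L^4}\lesssim \lVert\Lambda^s\theta_1\rVert_{L^2}^{1/2}\lVert\Lambda^{s+\gamma/2}\theta_1\rVert_{L^2}^{1/2}$ for the appropriate $a$ bounds it by $\lVert\Lambda^s\theta_1\rVert_{L^2}\lVert\Lambda^{s+\gamma/2}\theta_1\rVert_{L^2}^2$, which is exactly the last term on the right-hand side; this is the argument of Ju \cite{NJ22}. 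For $I_1$ and $I_3$, which are linear in $\theta_0$, I would split $\Lambda^s$ across the product via a Coifman–Meyer/Kato–Ponce product rule, distributing derivatives between the $\theta_0$ factor and the $\theta_1$ factor, use boundedness of $R^\perp$, and then interpolate: on the $\theta_1$ factors apply fractional Gagliardo–Nirenberg to extract a fractional power of $\lVert\Lambda^{s+\gamma/2}\theta_1\rVert_{L^2}$ and complete the square (Young's inequality) so that the full power-$2$ dissipation can be absorbed on the left, at the cost of a multiple of $\lVert\Lambda^s\theta_1\rVert_{L^2}^2$ times a norm of $\theta_0$. The term $\lVert\Lambda^{s+\gamma/2}\theta_0\rVert_{L^2}^2$ appears when the derivatives hit $\theta_0$ in the most demanding way; the term with the extra $\sigma$ derivatives on $\theta_0$ (for a small fixed $\sigma>0$, chosen depending only on $\gamma$) and the extra $\lVert\theta_1\rVert_{L^2}^2$ factor appears from the low-frequency interactions, where one pays a little extra regularity on $\theta_0$ to compensate for the lack of enough derivatives on $\theta_1$ — this is where the $L^1$ decay of $\theta_0$ (via \eqref{eq-heat1}–\eqref{eq-heat2}) will later be used to make the coefficient time-integrable, though that is not needed for the present lemma, which is just the differential inequality.

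The main obstacle I anticipate is the bookkeeping in $I_3 = \langle\Lambda^s((R^\perp\theta_1)\cdot\nabla\theta_0),\Lambda^s\theta_1\rangle$: here there is no cancellation structure to exploit (unlike $I_2$), $\theta_0$ carries the extra derivative from $\nabla$, and $\theta_1$ appears with only $s$ derivatives to spare, so it is delicate to distribute $\Lambda^s$ and interpolate so that one never asks for more than $\lVert\Lambda^{s+\gamma/2}\theta_1\rVert_{L^2}$ with power at most $2$ and never more than $\lVert\Lambda^{s+\gamma/2+\sigma}\theta_0\rVert_{L^2}$ — in particular the low-high piece, where $\theta_1$ is at low frequency and $\theta_0$ at high frequency, forces the appearance of the $\lVert\theta_1\rVert_{L^2}^2$ term and pins down how large $\sigma$ must be taken relative to $\gamma$. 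Once the exponents in the Gagliardo–Nirenberg inequalities are verified to be admissible (which requires $2-\gamma = s$ to be in the right range, and is where $0<\gamma<1$ enters), Young's inequality applied with the right split of the exponent $2$ gives the stated inequality with the factor $\tfrac14$ retained on the left.
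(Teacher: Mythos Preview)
Your overall architecture matches the paper's: differentiate $\tfrac12\lVert\Lambda^s\theta_1\rVert_{L^2}^2$, get three trilinear pieces, absorb into the dissipation via Young. Your handling of $I_2$ is exactly the paper's (commutator cancellation from $\divop R^\perp\theta_1=0$, then the Kato--Ponce commutator bound with the $L^{4/\gamma}$--$L^{4/(2-\gamma)}$ split and Sobolev). The gap is in $I_1$. You group $I_1$ with $I_3$ and propose a bare product rule on $\Lambda^s\big[(R^\perp\theta_0)\cdot\nabla\theta_1\big]$; but the Kato--Ponce product estimate then unavoidably produces a term of the form $\lVert R^\perp\theta_0\rVert_{L^q}\lVert\Lambda^s\nabla\theta_1\rVert_{L^{q'}}$, i.e.\ $s+1$ derivatives on $\theta_1$, and since $s+1>s+\tfrac{\gamma}{2}$ for $\gamma<1$ this cannot be interpolated back to $\lVert\Lambda^{s+\gamma/2}\theta_1\rVert_{L^2}$ by any choice of $q'$. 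The fix (and what the paper does) is to realize that $R^\perp\theta_0$ is \emph{also} divergence free, so $\int(\Lambda^s\theta_1)(R^\perp\theta_0)\cdot\nabla\Lambda^s\theta_1\,dx=0$ and $I_1$ reduces to the same commutator form as $I_2$; the commutator estimate then gives $\lVert\Lambda^s\theta_1\rVert_{L^2}\lVert\Lambda^{s+\gamma/2}\theta_0\rVert_{L^2}\lVert\Lambda^{s+\gamma/2}\theta_1\rVert_{L^2}$, and Young absorbs.

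For $I_3$ your plan is workable but misses a simplifying device the paper uses: before Cauchy--Schwarz, shift $\tfrac{\gamma}{2}$ derivatives across the pairing via self-adjointness of $\Lambda$, writing $\langle\Lambda^s\theta_1,\Lambda^s[(R^\perp\theta_1)\cdot\nabla\theta_0]\rangle=\langle\Lambda^{s+\gamma/2}\theta_1,\Lambda^{s-\gamma/2}[(R^\perp\theta_1)\cdot\nabla\theta_0]\rangle$. One Young inequality then isolates $\tfrac12\lVert\Lambda^{s+\gamma/2}\theta_1\rVert_{L^2}^2$ on the left and leaves only $\lVert\Lambda^{s-\gamma/2}[(R^\perp\theta_1)\cdot\nabla\theta_0]\rVert_{L^2}^2$ to estimate; the product rule at order $s-\tfrac{\gamma}{2}$ (rather than $s$) with the H\"older pair $(q,r)=(\tfrac{2}{\sigma-(1-\gamma)},\tfrac{2}{2-\gamma-\sigma})$, $1-\gamma<\sigma<2-\gamma$, followed by Sobolev and interpolation of $\lVert\Lambda^{s-\sigma}\theta_1\rVert_{L^2}$ between $L^2$ and $\dot H^s$, is what produces exactly the squared $\theta_0$-coefficients and the $(\lVert\theta_1\rVert_{L^2}^2+\lVert\Lambda^s\theta_1\rVert_{L^2}^2)$ prefactor appearing in the statement. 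Your direct approach to $I_3$ would likely yield a usable inequality but not the precise form asserted.
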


\begin{proof}
Letting $\theta_1$ be as in the statement of the lemma, we observe that for all $t>0$ we have
\begin{align}
\nonumber \frac{d}{dt}\bigg[\frac{1}{2}\lVert \Lambda^s\theta_1\rVert_{L^2}^2\bigg]&=-\int (\Lambda^s\theta_1)\Lambda^s[\Lambda^{\gamma}\theta_1+(R^\perp\theta_0)\cdot\nabla\theta_1+(R^\perp\theta_1)\cdot\nabla\theta_0]dx\\
\nonumber &\leq -\int \Lambda^s\theta_1\Lambda^{s+\gamma}\theta_1dx\\
\nonumber &\hspace{0.2in}+\bigg|\int (\Lambda^s\theta_1)\Big(\Lambda^s[(R^\perp\theta_0)\cdot\nabla\theta_1]-(R^\perp\theta_0)\cdot\nabla\Lambda^s\theta_1\Big)dx\bigg|\\
\nonumber &\hspace{0.2in}+\bigg|\int (\Lambda^s\theta_1)\Big(\Lambda^s[(R^\perp\theta_1)\cdot\nabla\theta_1]-(R^\perp\theta_1)\cdot\nabla\Lambda^s\theta_1\Big)dx\bigg|\\
&\hspace{0.2in}+\bigg|\int (\Lambda^s\theta_1)\Lambda^s[(R^\perp\theta_1)\cdot\nabla\theta_0]dx\bigg|,\label{eq-A}
\end{align}
where we have used the identities $$\int (\Lambda^s\theta_1)((R^\perp\theta_i)\cdot \nabla\Lambda^s\theta_1)dx=0,\quad i\in\{0,1\},$$ which follow from integration by parts.  Noting that integration by parts also gives $$\langle \Lambda^s\theta_1,\Lambda^{s+\gamma}\theta_1\rangle=\lVert \Lambda^{s+\gamma/2}\theta_1\rVert_{L^2}^2,$$ and $$\langle \Lambda^s\theta_1,\Lambda^s[(R^\perp\theta_1)\cdot\nabla\theta_0]\rangle=\langle \Lambda^{s-\frac{\gamma}{2}}\theta_1,\Lambda^{s+\frac{\gamma}{2}}[(R^\perp\theta_1)\cdot\nabla\theta_0\rangle,$$ we obtain the bound
\begin{align}
\nonumber (\ref{eq-A})&\leq -\lVert \Lambda^{s+\frac{\gamma}{2}}\theta_1\rVert_{L^2}^2+\lVert \Lambda^s\theta_1\rVert_{L^2}\lVert \Lambda^s[(R^\perp\theta_0)\cdot\nabla\theta_1]-(R^\perp\theta_0)\cdot\nabla\Lambda^s\theta_1\rVert_{L^2}\\
\nonumber &\hspace{0.2in}+\lVert \Lambda^s\theta_1\rVert_{L^2}\lVert \Lambda^s[(R^\perp\theta_1)\cdot\nabla\theta_1]-(R^\perp\theta_1)\cdot\nabla\Lambda^s\theta_1\rVert_{L^2}\\
\nonumber &\hspace{0.2in}+\lVert \Lambda^{s+\frac{\gamma}{2}}\theta_1\rVert_{L^2}\lVert \Lambda^{s-\frac{\gamma}{2}}[(R^\perp\theta_1)\cdot\nabla\theta_0]\rVert_{L^2}\\
\nonumber &\leq -\frac{1}{2}\lVert \Lambda^{s+\frac{\gamma}{2}}\theta_1\rVert_{L^2}^2+\lVert \Lambda^s\theta_1\rVert_{L^2}\lVert \Lambda^s[(R^\perp\theta_0)\cdot\nabla\theta_1]-(R^\perp\theta_0)\cdot\nabla\Lambda^s\theta_1\rVert_{L^2}\\
\nonumber &\hspace{0.2in}+\lVert \Lambda^s\theta_1\rVert_{L^2}\lVert \Lambda^s[(R^\perp\theta_1)\cdot\nabla\theta_1]-(R^\perp\theta_1)\cdot\nabla\Lambda^s\theta_1\rVert_{L^2}\\
&\hspace{0.2in}+C\lVert \Lambda^{s-\frac{\gamma}{2}}[(R^\perp\theta_1)\cdot\nabla\theta_0]\rVert_{L^2}^2.\label{eq-B}
\end{align}

We now invoke the procedure of commutator estimates as in \cite{NJ22}.  In particular, recall that we have
\begin{align*}
\nonumber&\lVert \Lambda^s[h\cdot k]-h\cdot\Lambda^s k\rVert_{L^2}\\
&\hspace{0.2in}\lesssim \lVert \nabla h\rVert_{L^{4/\gamma}}\lVert \Lambda^{s-1} k\rVert_{L^{4/(2-\gamma)}} +\lVert \Lambda^s h\rVert_{L^{4/(2-\gamma)}}\lVert k\rVert_{L^{4/\gamma}}
\end{align*}
for all $h$ and $k$ for which the right-hand side is finite.  Applying this with $h:=R^\perp\theta_0$ and $k:=\nabla\theta_1$, we obtain that the right-hand side of \eqref{eq-B} is bounded by
\begin{align}
\nonumber & -\frac{1}{2}\lVert \Lambda^{s+\frac{\gamma}{2}}\theta_1\rVert_{L^2}^2+C\cdot\bigg(\lVert \Lambda^s\theta_1\rVert_{L^2}\lVert \Lambda \theta_0\rVert_{L^{4/\gamma}}\lVert \Lambda^{s}\theta_1\rVert_{L^{4/(2-\gamma)}}\\
\nonumber &\hspace{2.0in}+\lVert \Lambda^s\theta_1\rVert_{L^2}\lVert \Lambda^s\theta_0\rVert_{L^{4/(2-\gamma)}}\lVert \Lambda \theta_1\rVert_{L^{4/\gamma}}\\
\nonumber &\hspace{2.0in}+\lVert \Lambda^s\theta_1\rVert_{L^2}\lVert \Lambda \theta_1\rVert_{L^{4/\gamma}}\lVert \Lambda^{s}\theta_1\rVert_{L^{4/(2-\gamma)}}\\
\nonumber &\hspace{2.0in}+\lVert \Lambda^s\theta_1\rVert_{L^2}\lVert \Lambda^s\theta_1\rVert_{L^{4/(2-\gamma)}}\lVert \Lambda \theta_1\rVert_{L^{4/\gamma}}\\
&\hspace{2.0in}+\lVert \Lambda^{s-\frac{\gamma}{2}}[(R^\perp\theta_1)\cdot\nabla\theta_0]\rVert_{L^2}^2\bigg).\label{eq-B1}
\end{align}
The Sobolev embedding then gives
\begin{align*}
(\ref{eq-B1})&\leq -\frac{1}{2}\lVert \Lambda^{s+\frac{\gamma}{2}}\theta_1\rVert_{L^2}^2+C\cdot \bigg(\lVert \Lambda^s\theta_1\rVert_{L^2}\lVert \Lambda^{s+\frac{\gamma}{2}}\theta_0\rVert_{L^2}\lVert \Lambda^{s+\frac{\gamma}{2}}\theta_1\rVert_{L^2}\\
&\hspace{2.2in}+\lVert \Lambda^s\theta_1\rVert_{L^2}\lVert \Lambda^{s+\frac{\gamma}{2}}\theta_1\rVert_{L^2}^2\\
&\hspace{2.2in}+\lVert \Lambda^{s-\frac{\gamma}{2}}[(R^\perp\theta_1)\cdot\nabla\theta_0]\rVert_{L^2}^2\bigg)\\
&\leq -\frac{1}{4}\lVert \Lambda^{s+\frac{\gamma}{2}}\theta_1\rVert_{L^2}^2+C\cdot\bigg(\lVert \Lambda^s\theta_1\rVert_{L^2}^2\lVert \Lambda^{s+\frac{\gamma}{2}}\theta_0\rVert_{L^2}^2+\lVert \Lambda^s\theta_1\rVert_{L^2}\lVert \Lambda^{s+\frac{\gamma}{2}}\theta_1\rVert_{L^2}^2\\
&\hspace{2.2in}+\lVert \Lambda^{s-\frac{\gamma}{2}}[(R^\perp\theta_1)\cdot\nabla\theta_0]\rVert_{L^2}^2\bigg).
\end{align*}

To estimate the second term on the right-hand side of this bound, fix $\sigma>0$ with $1-\gamma<\sigma<2-\gamma$ as a parameter to be determined later in the argument, and set $(q,r)=(\frac{2}{\sigma-(1-\gamma)},\frac{2}{2-\gamma-\sigma})$.  Then, by the fractional product rule, the boundedness of Riesz transforms, and the Sobolev embeddings, we have
\begin{align}
\nonumber &\lVert \Lambda^{s-\frac{\gamma}{2}}[(R^\perp\theta_1)\cdot \nabla\theta_0]\rVert_{L^2}\\
\nonumber &\hspace{0.2in}\lesssim \lVert \Lambda^{s-\frac{\gamma}{2}}\theta_1\rVert_{L^{4/(2-\gamma)}}\lVert \Lambda\theta_0\rVert_{L^{4/\gamma}}+\lVert \theta_1\rVert_{L^{q}}\lVert \Lambda^{s-\frac{\gamma}{2}+1}\theta_0\rVert_{L^{r}}\\
&\hspace{0.2in}\lesssim \lVert \Lambda^{s}\theta_1\rVert_{L^2}\lVert \Lambda^{s+\frac{\gamma}{2}}\theta_0\rVert_{L^2}+\lVert \Lambda^{s-\sigma}\theta_1\rVert_{L^2}\lVert \Lambda^{s+\frac{\gamma}{2}+\sigma}\theta_0\rVert_{L^2}.\label{eq-C}
\end{align}

Now, by standard interpolation inequalities,
\begin{align*}
(\ref{eq-C})&\lesssim \lVert \Lambda^{s}\theta_1\rVert_{L^2}\lVert \Lambda^{s+\frac{\gamma}{2}}\theta_0\rVert_{L^2}\\
&\hspace{0.2in}+\lVert \theta_1\rVert_{L^2}^{\sigma/s}\lVert \Lambda^s\theta_1\rVert_{L^2}^{1-(\sigma/s)}\lVert \Lambda^{s+\frac{\gamma}{2}+\sigma}\theta_0\rVert_{L^2}.
\end{align*}

Applying Young's inequality and collecting the above estimates, we obtain the desired bound.
\end{proof}

\section{Proof of Theorem \ref{thm1}}

We are now ready to give the proof of Theorem $\ref{thm1}$.  The argument is based on a combination of the a priori bounds expressed in Lemma $\ref{lem1}$ for the perturbed equation \eqref{eq-theta1} with

\begin{proof}[Proof of Theorem \ref{thm1}]
Let $s=2-\gamma$ be as in the statement.  Fix $\epsilon\in (0,1)$ to be determined later in the argument, and suppose that $f\in L^1(\mathbb{R}^2)\cap H^s(\mathbb{R}^2)$ and $g\in H^2(\mathbb{R}^2)$ are such that $f$ is radially symmetric and $\lVert g\rVert_{H^2}\leq \epsilon$.

By the local well-posedness theory for the supercritical SQG equation ($\ref{eq-gwp-1}$), there exists $T_1>0$ such that a unique smooth solution $\theta$ to the initial value problem ($\ref{eq-gwp-1}$) exists on the interval $[0,T_1]$.  Now, let $\theta_0:t\mapsto K_\gamma(t)\ast f$ be as in Section \ref{sec2} and set $$\theta_1(t):=\theta(t)-\theta_0(t)$$ for $t\in [0,T_1]$.  An application of Lemma \ref{lem1} then gives the bound
\begin{align}
\nonumber &\frac{d}{dt}\Big[\frac{1}{2}\lVert \Lambda^s\theta_1\rVert_{L^2}^2\Big]+\frac{1}{4}\lVert \Lambda^{s+\frac{\gamma}{2}}\theta_1\rVert_{L^2}^2\\
\nonumber &\hspace{0.2in}\lesssim \lVert \Lambda^s\theta_1\rVert_{L^2}^2\lVert \Lambda^{s+\frac{\gamma}{2}}\theta_0\rVert_{L^2}^2+(\lVert\theta_1\rVert_{L^2}^2+\lVert \Lambda^s\theta_1\rVert_{L^2}^2)\lVert \Lambda^{s+\frac{\gamma}{2}+\sigma}\theta_0\rVert_{L^2}^2\\
&\hspace{0.4in}+\lVert \Lambda^s\theta_1\rVert_{L^2}\lVert \Lambda^{s+\frac{\gamma}{2}}\theta_1\rVert_{L^2}^2.\label{eq-est1}
\end{align}
Moreover, multiplying \eqref{eq-theta1} by $\theta_1$ and integrating gives
\begin{align}
&\frac{d}{dt}\Big[\frac{1}{2}\lVert \theta_1\rVert_{L^2}^2\Big]+\lVert \Lambda^{\frac{\gamma}{2}}\theta_1\rVert_{L^2}^2\lesssim \lVert \theta_1\rVert_{L^2}^2\lVert \theta_0\rVert_{L^\infty}.\label{eq-est2}
\end{align}

Our goal is to find $t_2>0$ so that $\theta_0(t_2)+\theta_1(t_2)$ has small $\dot{H}^s$ norm (yielding access to the small data global well-posedness theory for $\theta$).  For this, note that by the local theory for \eqref{eq-gwp-1}, we can choose $t_1\in (0,T_1]$ independent of $\epsilon\in (0,1)$ so that $$\lVert \theta_1(t_1)\rVert_{H^s}\leq 2\epsilon.$$
Indeed, if $\theta_1(0,\cdot)\equiv 0$, then it remains to be zero at $t_1$.
Now if $\theta_1(0,\cdot)$ is sufficiently small in $H^s$, then $\theta(0,\cdot)$ is sufficiently close to $\theta_0(0,\cdot)$ in $H^s$. By the continuity dependence of the solution with respect to the initial data, $\theta(t_1,\cdot)$ is sufficiently close to $\theta_0(t_1,\cdot)$ in $H^s$, and thus the $H^s$ norm of $\theta_1(t_1,\cdot)$ can be as small as we want provided that $\theta_1(0,\cdot)$ is sufficiently small.

On the other hand, the smoothness of $K_\gamma(t)$ for all $t>0$ implies that one has $\theta_0(t_1)\in C^\infty(\mathbb{R}^2)$, so that for all $t>t_1$ one has
\begin{align*}
\lVert \theta_0(t)\rVert_{L^2}&\leq \lVert \theta_0(t_1)\rVert_{L^2},\quad \lVert \theta_0(t)\rVert_{\dot{H}^s}\leq \lVert \theta_0(t_1)\rVert_{\dot{H}^s},
\end{align*}
and
\begin{align*}
\lVert \theta_0(t)\rVert_{L^\infty}\leq \lVert \theta_0(t_1)\rVert_{L^\infty}.
\end{align*}
We now combine these estimates with the decay enjoyed by $\theta_0$ as a result of \eqref{eq-heat1} and \eqref{eq-heat2} to establish the global result, provided $\epsilon$ is chosen sufficiently small.  In particular, let $\epsilon_0>0$ be chosen so that solutions to \eqref{eq-gwp-1} corresponding to data with $H^s$ norm less than $\epsilon_0$ are global.  Now, choose $t_2>t_1$ large enough so that $$\lVert\theta_0(t_2)\rVert_{H^s}<\frac{\epsilon_0}{2}.$$

Returning to estimates for $\theta_1$, we invoke a continuity argument on the $H^s$ norm, based on the estimates \eqref{eq-est1} and \eqref{eq-est2}.  Note that we can find $C_1=C_1(t_1)>0$ such that
\begin{align*}
\frac{d}{dt}\Big[\frac{1}{2}\lVert \theta_1\rVert_{H^s}^2\Big]+\frac{1}{4}\lVert \theta_1\rVert_{\dot{H}^{\frac{\gamma}{2}}\cap \dot{H}^{s+\frac{\gamma}{2}}}^2\leq C_1\lVert \theta_1\rVert_{H^s}^2+C_1\lVert \theta_1\rVert_{\dot{H}^s}\lVert \theta_1\rVert_{\dot{H}^{s+\frac{\gamma}{2}}}^2.
\end{align*}

Now, choose a parameter $\epsilon_1<1/(16C_1)$ and set $I:=\{T\in (t_1,t_2]:\lVert \theta_1(t)\rVert_{H^s}\leq \epsilon_1\,\,\textrm{for all}\,\,t\in [t_1,T]\}$.  It follows from the local well-posedness theory for \eqref{eq-theta1} that $I$ is a nonempty closed subset of $(t_1,t_2]$.  Moreover, for all $t\in I$ with $t<t_2$ the local theory also implies that we can choose $t_*\in (t,t_2]$ such that $\sup_{t<s<t_*} \lVert \theta_1(s)\rVert_{H^s}\leq 2\epsilon_1$, and thus for all $t_1<\sigma<t_*$
\begin{align*}
\frac{d}{dt}\Big[\frac{1}{2}\lVert \theta_1\rVert_{H^s}^2\Big]+\frac{1}{8}\lVert \theta_1\rVert_{\dot{H}^{\frac{\gamma}{2}}\cap \dot{H}^{s+\frac{\gamma}{2}}}^2\leq C_1\lVert \theta_1\rVert_{H^s}^2.
\end{align*}
This in turn yields, for $t_1<\sigma<t_*$,
\begin{align*}
\lVert \theta_1(\sigma)\rVert_{H^s}\leq \exp(C_1(\sigma-t_1))\lVert \theta_1(t_1)\rVert_{H^s}\leq 2\exp(C_1(t_2-t_1))\epsilon,
\end{align*}
where we have recalled that our choice of $t_*$ gave $t_*<t_2$.

It now follows that if $\epsilon$ is chosen sufficiently small to ensure $$2\exp(C_1(t_2-t_1))\epsilon<\epsilon_1,$$ we obtain $\sigma\in I$ for all $t_1<\sigma<t_*$.  Thus $I$ is open in $(t_1,t_2]$, so that $I=(t_1,t_2]$ and $$\sup_{t_1<t<t_2} \lVert \theta_1(t)\rVert_{H^s}\leq \frac{1}{16C_1}.$$

Repeating the above arguments, we obtain $$\lVert \theta_1(t_2)\rVert_{H^s}\leq 2\exp(C_1(t_2-t_1))\epsilon,$$ so that if $\epsilon$ is additionally chosen small enough to ensure $$2\exp(C_1(t_2-t_1))\epsilon<\epsilon_0/2,$$ the small-data global theory for the SQG equation \eqref{eq-gwp-1} with data $\theta_0(t_2)+\theta_1(t_2)$ at time $t_2$ applies, and we obtain the desired global solution.
\end{proof}

\end{document}